\documentclass{article}
\title{Between the conjectures of P\'{o}lya and Tur\'{a}n}
\author{T. S. Trudgian\\
  Department of Mathematics and Computer Science,\\
  University of Lethbridge,\\
  Alberta, Canada, T1K 3M4\\
  \texttt{tim.trudgian@uleth.ca}}
\usepackage{amsthm}
\usepackage{amsmath}
\usepackage{amssymb}
\newtheorem*{conj}{Conjecture}
\newtheorem*{conje}{\textbf{The $\alpha = \frac{1}{2}$} Conjecture}
\newtheorem*{theorem}{Theorem}
\begin{document}
\maketitle

\begin{abstract}
This paper is concerned with the constancy in the sign of $L(X, \alpha) = \sum_{1}^{X} \frac{\lambda(n)}{n^{\alpha}}$, where $\lambda(n)$ the Liouville function. The non-positivity of $L(X, 0)$ is the P\'{o}lya conjecture, and the non-negativity of $L(X, 1)$ is the Tur\'{a}n conjecture --- both of which are false. By constructing an auxiliary function, evidence is provided that $L(X, \frac{1}{2})$ is the best contender for constancy in sign. The core of this paper is the conjecture that $L(X, \frac{1}{2}) \leq 0$ for all $X\geq 17$: this has been verified for $X\leq 300,001$.
\end{abstract}

\section{Introduction}
Let $\lambda(n)$ denote the Liouville function defined as $\lambda(n) = (-1)^{\Omega(n)},$ where $\Omega(n)$ counts, with multiplicity, the number of prime factors of $n$. It is well-known (see, e.g.\ \cite[Ch.\ I]{Titchmarsh}) that, for $\Re(s)>1$
\begin{equation}\label{open}
\sum_{n=1}^{\infty} \frac{\lambda(n)}{n^{s}} = \frac{\zeta(2s)}{\zeta(s)}.
\end{equation}
P\'{o}lya \cite{Polya} showed that, if $L(X) = \sum_{n\leq X} \lambda(n)$ is of a constant sign (for $X$ sufficiently large), then this implies the Riemann hypothesis. This lead to the 
\begin{conj}[The P\'{o}lya Conjecture]
\begin{equation*}
\sum_{n\leq X} \lambda(n) \leq 0,
\end{equation*}
for all $X\geq 2$.
\end{conj}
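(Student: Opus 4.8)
This statement is a conjecture rather than a theorem, and the abstract has already announced that it is false; so in place of a genuine proof I can only propose how one would investigate the claim and, ultimately, why that investigation must terminate in a counterexample. The natural starting point is to read off the expected size and sign of $L(X)=\sum_{n\le X}\lambda(n)$ from the Dirichlet series \eqref{open}. Applying Perron's formula, $L(X)$ equals, up to error terms, a contour integral of $\frac{\zeta(2s)}{\zeta(s)}\frac{X^{s}}{s}$, so its behaviour is dictated by the singularities of this integrand: the denominator $\zeta(s)$ contributes poles at the nontrivial zeros $\rho$, while the numerator $\zeta(2s)$ has a pole at $s=\tfrac12$. My first step would be to push the contour left and collect residues.

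The pole at $s=\tfrac12$ is the crux of the plausibility of the conjecture. Its residue produces a main term of size $\frac{\sqrt{X}}{\zeta(1/2)}$, and since $\zeta(\tfrac12)\approx -1.46<0$ this main term is \emph{negative}, of order $\asymp -\sqrt{X}$. This is precisely the phenomenon that makes $L(X)\le 0$ appear to hold, and it is also why the exponent $\alpha=\tfrac12$ is singled out in the title: the natural scale of $L(X)$ is $\sqrt{X}$. Any proof attempt would begin by isolating this negative main term and then try to show that nothing else can outweigh it.

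The remaining contributions come from the nontrivial zeros: each $\rho=\beta+i\gamma$ contributes an oscillating term of the shape $\frac{\zeta(2\rho)}{\zeta'(\rho)}\frac{X^{\rho}}{\rho}$. Under the Riemann hypothesis $\beta=\tfrac12$, so every such term has the \emph{same} order $\sqrt{X}$ as the main term but carries an oscillating factor $X^{i\gamma}$. A proof of the conjecture would therefore require showing that the sum of these oscillations can never overcome the negative main term, and this is exactly where the plan must break down — it is the main obstacle. Because the oscillatory terms are of the same magnitude as the main term, no crude bound can force $L(X)\le 0$, and for suitable $X$ the phases of the factors $X^{i\gamma}$ align so as to make the total positive. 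Locating such an $X$ is not a matter of direct enumeration, since the first sign change occurs only near $X\approx 9\times 10^{8}$; it requires the analytic, zero-based computation of Haselgrove, refined into the explicit counterexamples of Lehman and Tanaka. The honest conclusion of the investigation is thus a disproof rather than a proof, and it is this same competition between the $s=\tfrac12$ pole and the zeros that the paper presumably exploits to argue that $L(X,\tfrac12)$ is the most robust candidate for constancy of sign.
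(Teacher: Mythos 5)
You correctly recognized that this statement is a false conjecture rather than a provable theorem --- the paper itself offers no proof, only the citation of Haselgrove's disproof and Tanaka's first counterexample $X=906{,}105{,}257$ --- and your residue analysis (main term $\sqrt{X}/\zeta(\tfrac12)<0$, same-order oscillations $\frac{\zeta(2\rho)}{\zeta'(\rho)}\frac{X^{\rho}}{\rho}$ from the zeros) matches the structure the paper extracts. However, your technical route differs from the paper's. You propose Perron's formula and a contour shift, i.e.\ the classical explicit-formula decomposition of $L(X)$; the paper instead applies partial summation to get $F(s,\alpha)=\frac{\zeta(2(\alpha+s))}{s\,\zeta(\alpha+s)}$ as a Laplace-type transform of $L(e^{u},\alpha)$ and then invokes Ingham's Tauberian theorem, which sandwiches $\liminf$ and $\limsup$ of $e^{-u/2}L(e^{u})$ around a \emph{finite, smoothed} trigonometric polynomial $A^{*}_{T}(u,0)=\frac{1}{\zeta(1/2)}+2\Re\sum_{0<\gamma_{n}<T}\bigl(1-\tfrac{\gamma_{n}}{T}\bigr)\frac{\zeta(2\rho_{n})}{\rho_{n}\zeta'(\rho_{n})}e^{i\gamma_{n}u}$. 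The difference matters for rigor: your contour-shift sum over all zeros is only conditionally convergent and its truncation error is hard to control, so it yields a heuristic rather than a disproof; Ingham's theorem with the Fej\'{e}r factors $(1-\gamma_{n}/T)$ reduces the question to exhibiting a single pair $(u,T)$ with $A^{*}_{T}(u)>0$, which is a finite computation --- this is precisely Haselgrove's method and the reason the paper builds its whole $\alpha$-dependent framework on Ingham rather than on Perron. Your identification of $\alpha=\tfrac12$ as the natural scale, and of the sign of $\zeta(\tfrac12)$ as the source of the conjecture's plausibility, agrees with the paper's discussion following its equation (\ref{aux}).
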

Similarly, Tur\'{a}n \cite{Turan} showed that the Riemann hypothesis is true if the sum\footnote{Actually, Tur\'{a}n, in his work with the partial sums of the zeta-function, $U_{n} (s) = \sum_{1}^{n}\nu^{-s}$, showed that the Riemann hypothesis is true if $U_{n}(s)$ has no zeroes in the region $\sigma>1$. This was shown to be false by Montgomery \cite{MontyU} in 1983. The non-vanishing of $U_{n}(s)$ in $\sigma >1$ is equivalent to $\sum_{1}^{X} \lambda(n)n^{-s} \geq 0$, for $X$ sufficiently large, for all $\sigma \geq 1$ --- see the remarks after (\ref{prod}).} $T(X) = \sum_{n\leq X} \frac{\lambda(n)}{n}$ is of constant sign (for $X$ sufficiently large). This lead to the 
\begin{conj}[The Tur\'{a}n Conjecture]
\begin{equation*}
\sum_{n\leq X} \frac{\lambda(n)}{n} \geq 0,
\end{equation*}
for all $X\geq 0$.
\end{conj}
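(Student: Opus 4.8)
The plan is to reduce the weighted sum $T(X)=\sum_{n\le X}\lambda(n)/n$ to better-understood Mertens-type sums and then to control the sign of the result. The starting point is the identity $\sum_{d\mid n}\lambda(d)=\mathbf{1}_{\square}(n)$, equivalently $\lambda=\mathbf{1}_{\square}*\mu$ in the sense of Dirichlet convolution; writing $n=d^{2}m$ this gives the exact decomposition
\begin{equation*}
T(X)=\sum_{d^{2}m\le X}\frac{\mu(m)}{d^{2}m}=\sum_{d\le\sqrt{X}}\frac{1}{d^{2}}\,\widetilde{M}\!\left(\frac{X}{d^{2}}\right),\qquad \widetilde{M}(Y):=\sum_{m\le Y}\frac{\mu(m)}{m}.
\end{equation*}
This is consistent with (\ref{open}): letting $X\to\infty$ and using $\widetilde{M}(Y)\to0$ (equivalent to the prime number theorem) recovers $\sum_{n=1}^{\infty}\lambda(n)/n=\lim_{s\to1^{+}}\zeta(2s)/\zeta(s)=0$. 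Hence proving $T(X)\ge0$ is the same as showing that the partial sums approach this limit from above.

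First I would isolate the leading term $d=1$, namely $\widetilde{M}(X)$, and treat $\sum_{2\le d\le\sqrt{X}}d^{-2}\widetilde{M}(X/d^{2})$ as a perturbation. Using the classical elementary bound $|\widetilde{M}(Y)|\le1$ (a consequence of $\sum_{n\le Y}\mu(n)\lfloor Y/n\rfloor=1$), this perturbation is at most $\sum_{d\ge2}d^{-2}=\pi^{2}/6-1<0.65$ in absolute value, so the whole problem reduces to a sufficiently strong lower bound on $\widetilde{M}(X)$ itself, with the higher-$d$ terms playing a quantitatively secondary and largely cancelling role. A complementary route is Abel summation, which yields the exact identity $T(X)=L(X)/X+\int_{1}^{X}L(t)t^{-2}\,dt$; the contribution of $[1,2)$, where $L(t)=1$, already supplies a fixed positive mass $\tfrac12$, and I would try to show that this, together with $L(X)/X$, dominates the (mostly negative) tail of the integral for every $X\ge0$.

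The truly delicate input is the fine behaviour of $\widetilde{M}(Y)$ (equivalently of $L(t)$), and this is where I expect the argument to stand or fall. The analytic route via a truncated Perron formula,
\begin{equation*}
T(X)\approx\frac{1}{2\pi i}\int_{(c)}\frac{\zeta(2s)}{\zeta(s)}\,\frac{X^{s}}{s}\,ds,
\end{equation*}
makes the obstacle explicit: because $\zeta(s)$ has its pole at $s=1$, the factor $1/\zeta(s)$ kills the would-be main term, and the size and, crucially, the sign of $T(X)$ are then governed by the residues at the nontrivial zeros $\rho$ of $\zeta(s)$, each contributing an oscillatory term of order $X^{\rho-1}$. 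Establishing $T(X)\ge0$ for all $X\ge0$ therefore amounts to proving that this sum over zeros never overwhelms the small positive residual left by the decomposition above — a statement about the joint alignment of infinitely many zeros for which I have no mechanism. This is precisely the point at which the analogous reasoning fails for $L(X)$ (the P\'{o}lya conjecture), and I expect the same oscillatory zero-sum to be the decisive obstruction here.
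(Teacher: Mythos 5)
There is a fatal problem here that no amount of technical repair can fix: the statement you set out to prove is labelled a \emph{Conjecture} in the paper, and the paper itself records that it is false. Haselgrove \cite{HaselP} disproved it in 1958, and Borwein, Ferguson and Mossinghoff \cite{Borwein} showed that the least $X$ with $T(X) = \sum_{n\le X}\lambda(n)/n < 0$ exceeds $7\cdot 10^{13}$. So the paper contains no proof to compare against, and your closing admission --- that you have ``no mechanism'' to prevent the oscillatory sum over the zeros $\rho$ from overwhelming the small positive residual --- is not a gap awaiting an idea but the precise symptom of the statement's falsity. Indeed, the machinery of this very paper runs in the opposite direction: taking $\alpha = 1$ in Ingham's Theorem, the auxiliary polynomial $A_T^*(u,1)$ of (\ref{auxe}) has its limit points sandwiched as in (\ref{bothwa}) between $\underline{\lim}\, e^{u/2}T(e^u)$ and $\overline{\lim}\, e^{u/2}T(e^u)$, so exhibiting a single $u$ with $A_T^*(u,1)<0$ forces $T(e^u)<0$ somewhere --- which is exactly how Haselgrove refuted the conjecture. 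The joint alignment of zeros that you correctly identified as the obstruction does, in fact, occur.

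Setting falsity aside, your intermediate reductions, while correct as identities, could never have closed quantitatively. The decomposition $T(X)=\sum_{d\le\sqrt{X}}d^{-2}\widetilde{M}(X/d^{2})$ with the bound $|\widetilde{M}(Y)|\le 1$ makes the ``perturbation'' at most $\pi^{2}/6-1\approx 0.645$, but the ``leading term'' $\widetilde{M}(X)$ tends to $0$ by the prime number theorem, so for large $X$ it is dwarfed by your error bound and the decomposition carries no sign information whatsoever. Likewise, in the Abel-summation identity $T(X)=L(X)/X+\int_{1}^{X}L(t)t^{-2}\,dt$, the fixed mass $\tfrac12$ from $[1,2)$ cannot be shown to dominate: since $T(X)\to 0$ as $X\to\infty$, the positive and negative contributions cancel to within $o(1)$, and the sign of the residual is controlled exactly by the zero-sum of order $X^{\rho-1}$ that you left uncontrolled. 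The only defensible conclusion from your own analysis is the one the paper draws: the question is not provable positivity but the empirical bias in sign, and the paper's point is that the exponent $\alpha=\tfrac12$, not $\alpha=1$, is where the residues in (\ref{auxe}) are smallest and a one-sided inequality has its best chance.
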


In 1958 Haselgrove \cite{HaselP} showed that both of these conjectures are false, in spite of rather extensive numerical verification. The first value of $X$ (other than 1!\footnote{Fortunately, there is none of the customary ambiguity with the exclamation mark and factorial sign, nor for that matter, with the marking of this footnote.}) for which $L(X)> 0$ is $X= 906,105,257$, which was proved by Tanaka \cite{Tanaka}; the least $X$ for which $T(X) <0$ is greater than $7\cdot 10^{13}$, and was calculated explicitly by Borwein, Ferguson and Mossinghof \cite{Borwein}.

Here the following sum will be considered
\begin{equation*}
L(X, \alpha) = \sum_{n\leq X} \frac{\lambda(n)}{n^\alpha}, 
\end{equation*}
where $\alpha\geq 0$. The case $\alpha>1$ can be dispensed with easily. For a fixed $\alpha>1$ one can consider the Euler product of (\ref{open}), viz.
\begin{equation}\label{prod}
\sum_{n=1}^{\infty} \frac{\lambda(n)}{n^{\alpha}} = \prod_{p} \frac{\left(1-\frac{1}{p^{\alpha}}\right)}{\left(1-\frac{1}{p^{2a}}\right)} = \prod_{p} \left(1+\frac{1}{p^{\alpha}}\right)^{-1}.
\end{equation}
This is increasing with $\alpha$, whence one can use partial summation to show  there is a sufficiently large number, $X_{0}$ (which depends on $\alpha$) such that, for $X> X_{0}$ the sum $L(x, \alpha) \geq 0$.

Of interest, therefore, are in intemediate cases $0<\alpha < 1$. The approach taken here follows that taken by Haselgrove [\textit{op.\ cit.}] and is based on a theorem of Ingham \cite{Ingham1942}. Suppose one proposes the `$\alpha_+$ Conjecture', which says that, for $X$ sufficiently large, $L(X, \alpha) \geq 0$. To disprove such a conjecture one creates an auxiliary function which approximates $L(X, \alpha)$, whence, if this auxiliary function were negative in a certain region, then $L(X, \alpha)$ would likely be negative there as well. The locating of such a value (or the determining of an interval in which such a value lies) establishes the falsity of the $\alpha_+$ Conjecture. 

Despite the retrograde approach, it will be shown in the next section that the creating of this auxiliary function gives an insight into the expected behaviour of $L(X, \alpha)$. It is difficult to predict, \textit{ab initio}, those values of $\alpha$ for which there is a preponderance of negative (or positive) values of $L(X, \alpha)$; a na\:{i}ve computation of various values of $\alpha$ for, say, $X \leq 100$ is not very illuminating. 

\section{The work of Ingham}
Let $F(s, \alpha) = \int_{0}^{\infty} A(u, \alpha) e^{-su}\, du$, where $A(u, \alpha)$ is absolutely integrable over every finite interval $0\leq u \leq U$, and suppose the integral is convergent in the half-plane $\sigma >\frac{1}{2} - \alpha \geq 0$. Write $\rho_{n} = \frac{1}{2} + i\gamma_{n}$ for the $n$th zero of $\zeta(s)$, where $\gamma_{-n} = -\gamma_{n}$ and define $\gamma_{0} = 0$. Suppose $F(s, \alpha)$ has poles at $s= \rho_{n}-\alpha$ for $-N\leq n \leq N$, with residues labeled $r_{n} = r_{n}(\alpha)$. Define, for $\sigma > \frac{1}{2} - \alpha$, the function $F^{*}(s, \alpha) = \sum_{-N}^{N} \frac{r_{n}}{s-(\rho_{n} - \alpha)}$. Thus the function $F(s, \alpha) - F^{*}(s, \alpha)$ is regular in the region $\sigma > \frac{1}{2}-\alpha$. If one writes $A^{*}(u) = e^{(1/2 - \alpha)u} \sum_{-N}^{N} r_{n}e^{i\gamma_{n}u}$, then it is easily seen that $F^{*}(s) = \int_{0}^{\infty}A^{*}(u) e^{-su}\, du$. Then we have the following
\begin{theorem}[Ingham]\label{IngThm}
For a fixed $T$, as $u\rightarrow\infty$
\begin{equation}\label{bothwa}
\underline{\lim}\, e^{(\alpha-1/2)u} A(u, \alpha) \leq \underline{\overline{\lim}}\, A^{*}_T (u, \alpha) \leq \overline{\lim}\, e^{(\alpha-1/2)u} A(u,\alpha),
\end{equation}
where $A^{*}_T (u,\alpha)$ is a smoothed trigonometric polynomial defined as
\begin{equation*}
A^{*}_T (u, \alpha) =r_{0} + 2\Re \sum_{0<\gamma_{n} < T} \left( 1-\frac{\gamma_{n}}{T}\right)r_{n}e^{i\gamma_{n}u}.
\end{equation*}
\end{theorem}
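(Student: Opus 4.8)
The plan is to follow Ingham's Fej\'er-kernel method: realise the smoothed polynomial $A^*_T$ as a positive average of the unknown function, and then use the regularity of $F-F^*$ across the critical line to show that this average already determines the oscillation. Throughout write $c=\frac12-\alpha$ and set $B(u)=e^{(\alpha-1/2)u}A(u,\alpha)=e^{-cu}A(u,\alpha)$, so that the substitution $w=s-c$ turns $F$ into $G(w)=\int_0^\infty B(u)e^{-wu}\,du$, convergent for $\Re w>0$, whose poles lie at $w=i\gamma_n$ with residues $r_n$. Since the statement is symmetric under $A\mapsto -A$, it suffices to prove the two one-sided inequalities $\limsup_u A^*_T(u)\le\limsup_u B(u)$ and $\liminf_u B(u)\le\liminf_u A^*_T(u)$; the remaining two relations follow from $\liminf A^*_T\le\limsup A^*_T$, and when the relevant extreme limit of $B$ is $\pm\infty$ the corresponding inequality is vacuous.

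I would introduce the Fej\'er kernel
\[
K_T(v)=\frac{T}{2\pi}\left(\frac{\sin(Tv/2)}{Tv/2}\right)^2,\qquad K_T\ge 0,\quad \int_{-\infty}^{\infty}K_T(v)\,dv=1,\quad \int_{-\infty}^\infty K_T(v)e^{-i\xi v}\,dv=\left(1-\frac{|\xi|}{T}\right)_+.
\]
Choosing $N$ large enough that every $\gamma_n$ with $|\gamma_n|<T$ satisfies $|n|\le N$, and writing $P(u)=\sum_{-N}^{N}r_n e^{i\gamma_n u}=e^{-cu}A^*(u)$ for the underlying trigonometric polynomial, the Fourier property above gives the \emph{exact} identity
\[
(K_T*P)(u)=\sum_{|\gamma_n|<T}\left(1-\frac{|\gamma_n|}{T}\right)r_n e^{i\gamma_n u}=A^*_T(u),
\]
where $\gamma_{-n}=-\gamma_n$ and $r_{-n}=\overline{r_n}$ are used to fold the sum into the stated real form.

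Next I would record the averaging inequality. Because $K_T\ge0$ has unit mass, the convolution $(K_T*B)(u)=\int_{-\infty}^u K_T(v)B(u-v)\,dv$ (with $B$ extended by $0$ to $(-\infty,0)$) is pushed toward the mean of $B$: if $\limsup_u B=M<\infty$, splitting the integral at $u-v=U_0$ bounds the main part by $(M+\varepsilon)\int_{-\infty}^{u-U_0}K_T\to M+\varepsilon$, while the transient $u-v\in[0,U_0]$ contributes at most $\big(\sup_{v\ge u-U_0}K_T(v)\big)\int_0^{U_0}|B|\to0$ (here the absolute integrability of $A$ on finite intervals is all that is needed). This yields $\limsup_u(K_T*B)\le\limsup_u B$, and symmetrically for $\liminf$. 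Combining with $(K_T*P)=A^*_T$ and the elementary bound $\big|\int_u^\infty K_T(v)P(u-v)\,dv\big|\le\|P\|_\infty\int_u^\infty K_T\to0$, the theorem reduces to the single assertion
\[
\bigl(K_T*(B-P)\bigr)(u)\longrightarrow 0\qquad(u\to\infty).
\]

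The heart of the argument, and the step I expect to be the main obstacle, is this last limit. Setting $\phi=B-P$ and $\Phi(w)=\int_0^\infty\phi(u)e^{-wu}\,du=G(w)-G^*(w)$, the hypothesis that $F-F^*$ is regular for $\sigma>\frac12-\alpha$ — together with the choice of $N$ cancelling exactly the poles $i\gamma_n$ with $|\gamma_n|<T$ — makes $\Phi$ regular on an open neighbourhood of the segment $\{it:|t|\le T\}$. Since the Fej\'er weight is supported in $[-T,T]$, Fourier inversion formally gives
\[
(K_T*\phi)(u)=\frac{1}{2\pi}\int_{-T}^{T}\left(1-\frac{|t|}{T}\right)\Phi(it)\,e^{iut}\,dt,
\]
and as the integrand is continuous on the compact interval $[-T,T]$, the Riemann--Lebesgue lemma forces the right-hand side to $0$. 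The delicate point is the rigorous passage to this boundary representation: the Laplace integral defining $\Phi$ converges only in the open half-plane $\Re w>0$, so one cannot simply substitute $w=it$. I would instead write the representation at $w=\varepsilon+it$ with $\varepsilon>0$, where an interchange of the two integrations is justified and produces $\int_0^\infty\phi(y)e^{-\varepsilon(y-u)}K_T(u-y)\,dy$, and then let $\varepsilon\to0^+$, using the regularity of $\Phi$ across the segment to shift the contour onto the imaginary axis without crossing a pole. Controlling this double limit and the attendant interchange uniformly is the crux; once it is secured, Riemann--Lebesgue closes the argument, and the two one-sided inequalities, hence Theorem~\ref{IngThm}, follow.
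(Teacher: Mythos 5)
Your plan reconstructs Ingham's own argument --- the paper gives no proof at all, only the citation to Ingham, and Ingham's method is precisely this Fej\'er-kernel smoothing --- and the reductions you do carry out are sound: the identity $(K_T*P)(u)=A^*_T(u)$, the positivity-and-unit-mass averaging inequality, and the two tail estimates are correct and correctly assembled, so the theorem does reduce to showing $(K_T*(B-P))(u)\to 0$ as $u\to\infty$.

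The problem is that your write-up stops exactly at the step that carries all the analytic content, and the one assertion you make about that step does not follow from what is given. You claim that regularity of $F-F^*$ in the open half-plane $\sigma>\frac{1}{2}-\alpha$, together with the choice of $N$, ``makes $\Phi$ regular on an open neighbourhood of the segment $\{it:|t|\le T\}$.'' It does not: analyticity in an open half-plane gives no continuation across its boundary. For instance, $G$ could have a simple pole at some $i\beta$ with $|\beta|<T$ and $\beta$ not among the $\gamma_n$; this is fully compatible with regularity of $G-G^*$ in the \emph{open} half-plane, yet then $(K_T*(B-P))(u)$ tends not to $0$ but to the nonzero oscillation $2\Re\{(1-\beta/T)\,r_\beta e^{i\beta u}\}$, and your reduction collapses. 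Regularity of $F-F^*$ on (a neighbourhood of) the closed segment is Ingham's actual hypothesis --- in the present application it is supplied by the Riemann hypothesis together with the meromorphy of $\zeta(2(\alpha+s))/(s\zeta(\alpha+s))$, and the paper's phrase ``regular in the region $\sigma>\frac{1}{2}-\alpha$'' is a loose paraphrase of it --- so it must be invoked as a hypothesis, not derived. Separately, even granting it, the boundary passage is described but never executed: the interchange at $\Re w=\varepsilon$ cannot be justified by Fubini, since the Laplace integral of $\phi$ is in general only conditionally convergent (one needs uniform convergence on compacta, or integration by parts against the bounded partial integrals), and the limit $\varepsilon\to0^+$ on the convolution side also requires an argument, because $\int_0^\infty\phi(y)K_T(u-y)\,dy$ need not converge absolutely. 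Your own phrasing (``once it is secured'') concedes the point: as it stands this is a correct plan with the decisive step missing, not a proof.
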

\begin{proof}
See Ingham [\textit{op.\ cit.}] 
\end{proof}
For the present purpose take $A(u) = L(e^{u}, \alpha)$ --- see (\ref{Fdef}). If one were to find a value of $u$ and $T$ for which $A^{*}_T (u) >0$ then $L(e^{u}, \alpha) >0$, with a similar property holding in the other direction, courtesy of (\ref{bothwa}). Henceforth $A^{*}_T (u)$, which will of course depend of $\alpha$, will be called the `auxiliary polynomial'.

\section{Application of Ingham's Theorem}

For $\sigma > 1-\alpha$, one can use partial summation on (\ref{open}) to write
\begin{equation}\label{parts}
\frac{\zeta(2(\alpha +s))}{\zeta(\alpha +s)} = 
 \lim_{X\rightarrow\infty}\frac{\left(\sum_{n\leq X}\frac{\lambda(n)}{n^{\alpha}}\right)}{X^{s}} + s\int_{1}^{X} \left(\sum_{n\leq t}\frac{\lambda(n)}{n^{\alpha}}\right)t^{-(s+1)}\, dt.
\end{equation}
It was first shown by Landau that, on the Riemann hypothesis, $\sum_{n\leq X} \lambda(n) = O(X^{1/2 + \epsilon})$. Thus the integral on the right side of (\ref{parts}) converges and defines an analytic function for $\sigma > \frac{1}{2} - \alpha$, whence it follows by analytic continuation that
\begin{equation}\label{Fdef}
F(s, \alpha) := \frac{\zeta(2(\alpha+s))}{s\zeta(\alpha+s)} = \int_{1}^{\infty} \left(\sum_{n\leq t}\frac{\lambda(n)}{n^{\alpha}}\right)t^{-(s+1)}\, dt.
\end{equation}
Possible poles of $F(s, \alpha)$ are at $s=0$, at $s= \rho_{n} - \alpha$ and at $s=\frac{1}{2} - \alpha$. When $\alpha = 1$ there is no pole at $s=0$, and when $\alpha<\frac{1}{2}$ the pole at $s=0$ lies outside the half-plane $\sigma > \frac{1}{2} - \alpha$. Thus, when $\frac{1}{2}<\alpha<1$,
\begin{equation*}
\textrm{Res}\left(F(s, \alpha); s= 0\right) = \lim_{s\rightarrow 0} \frac{\zeta(2s)}{\zeta(s)} = 1.
\end{equation*}
When $\alpha = \frac{1}{2}$, writing $\zeta(s) = (s-1)^{-1} + \gamma + O(s-1)$, where $\gamma$ is Euler's constant, gives
\begin{equation*}
\textrm{Res}\left(F(s, \alpha); s= 0\right) = \frac{\gamma}{\zeta(\frac{1}{2})}.
\end{equation*}
Lastly, assuming the simplicity of the zeroes,
\begin{equation*}
\textrm{Res}\left(F(s, \alpha); s= \rho_{n} - \alpha, n\neq 0\right) = \frac{\zeta(2\rho_{n})}{(\rho_{n} - \alpha)\zeta'(\rho_{n})}.
\end{equation*}
Thus the auxiliary polynomial becomes
\begin{equation}\label{auxe}
A_{T}^{*}(u, \alpha) = r_{0}(\alpha) + 2 \Re \sum_{0<\gamma_{n}<T}\frac{\zeta(2\rho_{n})}{(\rho_{n}-\alpha)\zeta'(\rho_{n})}\left\{1-\frac{\gamma_{n}}{T}\right\} e^{i\gamma_{n}u},
\end{equation}
where,
\begin{equation}\label{aux}
r_{0}(\alpha) = \begin{cases} 
\{(1-2\alpha)\zeta(\frac{1}{2})\}^{-1}, & \mbox{if } 0\leq \alpha < \frac{1}{2}, \, \alpha = 1\\
\{(1-2\alpha)\zeta(\frac{1}{2})\}^{-1} + 1, & \mbox{if } \frac{1}{2}<\alpha<1 \\
\gamma/\zeta(\frac{1}{2}), &  \mbox{if } \alpha = \frac{1}{2}.
\end{cases}
\end{equation}

Thus the $\alpha = 0, 1$ cases of (\ref{aux}) give, respectively, with the P\'{o}lya and the Tur\'{a}n auxiliary polynomials. Since $\zeta(\frac{1}{2}) <0$, the auxiliary polynomial $A_{T}^{*}(u, 0)$ consists of a negative term followed by a sum of oscillating terms. One might suspect therefore that the P\'{o}lya conjecture `should' be true, because the terms in the sum should not reinforce one another sufficiently often. 

The leading term in $A_{T}^{*}(u, \alpha)$ decreases without limit as $\alpha\uparrow \frac{1}{2}$. Thus, one might suspect that, say, $L(X, 0.499)$ might be positive for all sufficiently large $X$. Computations of this sort do not yield the `expected' result, perhaps due to the presence of the $p_{n}  - \alpha$ term in the denominator of (\ref{aux}).

Note though, when $\alpha = \frac{1}{2}$, the summands in (\ref{auxe}) are of the form
\begin{equation*}
\Re \frac{\zeta(1 + 2i\gamma_{n})}{i\gamma_{n}\zeta'(\frac{1}{2} + \gamma_{n})}\theta_{n},
\end{equation*}
where $|\theta_{n}| \leq 1$. Fujii \cite{FujiiShanks} has proved that $\zeta'(\frac{1}{2} +i\gamma_{n})$ is real positive in the mean. On the Riemann hypothesis, Littlewood (see, e.g.\ \cite[Ch.\ XIV]{Titchmarsh}) showed that $\zeta(1+ t) = O(\log\log t).$ Thus, not only should the summands in (\ref{auxe}) be small, but they are at their smallest when $\alpha = \frac{1}{2}.$ This suggests that perhaps the strongest case for a constancy in sign of the sum $L(X, \alpha)$ is made when $\alpha = \frac{1}{2}$. Using Mathematica, I have checked that the sum $L(X, \frac{1}{2})$ is negative for all $17 \leq X \leq 300,001$. Of interest would be the investigation of
\begin{conje}
\begin{equation*}
L(X, 1/2) = \sum_{1 \leq n \leq X} \frac{\lambda(n)}{n^{1/2}} \leq 0,
\end{equation*}
for all $X\geq 17$.
\end{conje}

\bibliographystyle{plain}
\bibliography{themastercanada}

\end{document}